\numberwithin{figure}{section}
\numberwithin{table}{section}
\theoremstyle{definition}
\theoremstyle{plain}
\newcommand{\thistheoremname}{}
\newtheorem*{genericthm*}{\thistheoremname}
\newenvironment{namedthm*}[1]
  {\renewcommand{\thistheoremname}{#1}%
   \begin{genericthm*}}
  {\end{genericthm*}}
\newcommand{\lrabs}[1]{\left\lvert #1 \right\lvert}
\newcommand{\lrp}[1]{\left(#1\right)}
\newcommand{\lrb}[1]{\left[#1\right]}
\newtheorem{theorem}{Theorem}[section]
\newtheorem{corollary}[theorem]{Corollary}
\newtheorem{lemma}[theorem]{Lemma}
\newtheorem{mytable}[theorem]{Table}
\theoremstyle{remark}
\newtheorem{conjecture}[theorem]{\bf Conjecture}
\numberwithin{equation}{section}
\newcommand{\ben}{\begin{equation}}
\newcommand{\een}{\end{equation}}
\NewDocumentCommand{\sump}{e{_}}
 {%
  \DOTSB
  \mathop{\IfNoValueTF{#1}{\sump@{}}{\sump@{#1}}}%
  \nolimits
 }
\newcommand{\sump@}[1]{\mathpalette\sump@@{#1}}
\newcommand{\sump@@}[2]{%
  \ifx#1\displaystyle
    {\sump@display{#2}}%
  \else
    \sum@\nolimits'_{#2}%
  \fi
}
\newcommand{\sump@display}[1]{%
  \sbox\z@{$\m@th\displaystyle\sum@\nolimits'$}%
  \sbox\tw@{$\m@th\displaystyle\sum@\limits_{#1}$}%
  \sbox\@tempboxa{$\m@th\displaystyle'$}
  \mathop{\sum@\nolimits' \kern-\wd\@tempboxa}\limits_{#1}%
  \ifdim\wd\z@>\wd\tw@
    \kern\dimexpr\wd\z@-\wd\tw@\relax
  \fi
}
\newcommand{\Av}{{\textnormal{A\!v}}}
\DeclareMathOperator{\Tr}{Tr}
\DeclareMathOperator*{\Res}{Res}
\setlist[enumerate]{leftmargin=*,widest=0}
\setlist[itemize]{leftmargin=*,widest=0}
\def\subsection{\@startsection{subsection}{2}%
  \z@{.5\linespacing\@plus.7\linespacing}{.3\linespacing}%
  {\normalfont\bfseries}}
\def\subsubsection{\@startsection{subsubsection}{3}%
  \z@{.5\linespacing\@plus.7\linespacing}{.3\linespacing}%
  {\normalfont\bfseries}}
\keywords{Hecke operators, Spectra of Hecke operators, Newforms, Distribution of Fourier coefficients, Quadratic mean.}
\author[W. Cason]{William Cason}
\address[W. Cason]{School of Mathematical and Statistical Sciences, Clemson University, Clemson, SC, 29634}
\email{wbcason@clemson.edu}
\author[A. Jim]{Akash Jim}
\address[A. Jim]{Department of Mathematics, Princeton University, Princeton, NJ, 08540}
\email{ajim@princeton.edu}
\author[C. Medlock]{Charlie Medlock}
\address[C. Medlock]{School of Mathematical and Statistical Sciences, Clemson University, Clemson, SC, 29634}
\email{medloc5@clemson.edu}
\author[E. Ross]{Erick Ross}
\address[E. Ross]{School of Mathematical and Statistical Sciences, Clemson University, Clemson, SC, 29634}
\email{erickr@clemson.edu}
\author[H. Xue]{Hui Xue}
\address[H. Xue]{School of Mathematical and Statistical Sciences, Clemson University, Clemson, SC, 29634}
\email{huixue@clemson.edu}
\title{On the average size of the eigenvalues of the Hecke operators}
\begin{document}

\subjclass{11F25, 11F72, and 11F11.}

\begin{abstract}
We determine the average size of the eigenvalues of the Hecke operators acting on the cuspidal modular forms space $S_k(\Gamma_0(N))$ in both the vertical and the  horizontal perspective. The ``average size" is measured via the quadratic mean. 
\end{abstract}

\maketitle

\section{Introduction}\label{sec:intro}

Let $m\ge1$, $N\ge 1$, and $k \ge 2$ be positive integers, and let $T'_m(N,k) := T_m(N,k) / m^{(k-1)/2}$ denote the $m$-th normalized Hecke operator over $S_k(\Gamma_0(N))$. When $N$ and $k$ are clear from the context, we will just write $T'_m$.
In this paper, we study the average size of the eigenvalues for these Hecke operators.

One could interpret the notion of average size here in two different ways: vertically and horizontally. 
In Section \ref{sec:vertical}, we consider the vertical perspective; $m$ is fixed and $N$ and $k$ vary. For $m$ fixed, consider $N$ coprime to $m$ and $k \geq 2$ even. Also, let $s(N,k) = \dim S_k(\Gamma_0(N))$ and  $\lambda_1,\cdots, \lambda_{s(N,k)}$ be the eigenvalues of $T'_m(N,k)$ (with multiplicities). Then to measure the average size of the eigenvalues of $T'_m(N,k)$, we use the quadratic mean,
\begin{align} \label{eqn:Av-m-def}
    \Av_m(N,k):=\sqrt{\frac{1}{s(N,k)}\sum_{i=1}^{s(N,k)} \lambda_i^2}.
\end{align}
We note that since $N$ is coprime to $m$, the $\lambda_i$ are real \cite[Corollary 10.3.7]{cohen-stromberg}, and hence the above formula actually represents the average size. 
We also note that in this situation, the quadratic mean is more natural to study than, for example, the arithmetic mean of the absolute values of the $\lambda_i$; squares are much nicer to work with than absolute values. 
In Theorem \ref{thm:Av-vertical}, we will determine the behavior of $\Av_m(N,k)$ as $N+k \to \infty$. In the following, $\sigma_1(m)$ denotes the sum of divisors function, $\sigma_1(m) = \sum_{d\mid m} d$.
\begin{theorem} \label{thm:Av-vertical}
Let $m$ be fixed. Then for $N$ coprime to $m$ and $k \geq 2$ even,
$$\Av_m(N,k) \longrightarrow \sqrt{\frac{\sigma_1(m)}{m}} \quad \textnormal{as} \quad N+k \longrightarrow \infty.$$
\end{theorem}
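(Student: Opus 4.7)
My plan is to reduce $\Av_m(N,k)^2$ to a sum of normalized Hecke traces and then apply the Eichler--Selberg trace formula to each summand. Since $\gcd(m,N)=1$, the operator $T'_m$ is self-adjoint on $S_k(\Gamma_0(N))$ under the Petersson inner product, so its eigenvalues are real and $\sum_i \lambda_i^2 = \Tr(T'_m{}^2)$. Applying the Hecke multiplicativity relation
\[
T_m T_n \;=\; \sum_{\substack{d \mid \gcd(m,n)\\ \gcd(d,N)=1}} d^{k-1}\, T_{mn/d^2}
\]
with $n=m$ (the constraint $\gcd(d,N)=1$ being automatic from $\gcd(m,N)=1$) and normalizing both sides by $m^{k-1}$, one arrives, after the substitution $e=m/d$, at the clean identity $T'_m{}^2 = \sum_{e \mid m} T'_{e^2}$. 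Taking traces gives
\[
\Av_m(N,k)^2 \;=\; \frac{1}{s(N,k)}\sum_{e \mid m} \Tr\bigl(T'_{e^2}(N,k)\bigr).
\]

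The core claim is that for each $e\mid m$, $\Tr(T'_{e^2}(N,k))/s(N,k) \to 1/e$ as $N+k\to\infty$. The case $e=1$ is immediate since $T'_1=\Id$. For $e>1$, I would invoke the Eichler--Selberg trace formula for $\Tr(T_{e^2}\mid S_k(\Gamma_0(N)))$. Because $e^2$ is a perfect square, this formula has a distinguished ``identity-type'' main term of size $\tfrac{k-1}{12}\psi(N)\, e^{k-2}$, where $\psi(N)=[\mathrm{SL}_2(\BZ):\Gamma_0(N)]$. Normalizing by $(e^2)^{(k-1)/2}=e^{k-1}$ and dividing by $s(N,k)\sim\tfrac{k-1}{12}\psi(N)$ produces precisely $1/e$ in the limit. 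The remaining elliptic, hyperbolic, and parabolic contributions must be shown to be $o\bigl(e^{k-1}s(N,k)\bigr)$. For the elliptic term, one uses the standard bound $|p_k(t,e^2)|\le 2e^{k-1}/\sqrt{4e^2-t^2}$ together with polynomial-in-$N$ bounds on Hurwitz class numbers. For the hyperbolic and parabolic terms, indexed by divisors $d$ of $e^2$ with $d<e$, the factor $(d/e)^{k-1}$ forces decay in the $k$-aspect, while a $1/\psi(N)$ factor provides decay in the $N$-aspect.

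Summing over divisors yields $\lim_{N+k\to\infty} \Av_m(N,k)^2 = \sum_{e\mid m} 1/e = \sigma_1(m)/m$, and since $\Av_m(N,k)\ge 0$ taking square roots completes the proof. The main technical obstacle will be the error estimates in the previous step: the elliptic and hyperbolic contributions to $\Tr(T_{e^2})$ are each of magnitude comparable to $e^{k-1}$ in the $k$-aspect, so the required smallness emerges only after dividing by $s(N,k)\gtrsim k\psi(N)$. The bookkeeping must combine the $1/k$ saving (from oscillation in $p_k$ and from the strict inequality $d<e$) with the $1/\psi(N)$ saving in a way that is effective uniformly along the joint regime $N+k\to\infty$, not merely in either variable separately.
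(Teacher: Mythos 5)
Your proposal follows essentially the same route as the paper: reduce $\Av_m(N,k)^2$ via Hecke multiplicativity to $\frac{1}{s(N,k)}\sum_{e\mid m}\Tr T'_{e^2}$, then use the perfect-square trace asymptotic $\Tr T'_{e^2} = \frac{k-1}{12}\psi(N)\frac{1}{e} + O(N^{1/2+\varepsilon})$ together with $s(N,k)\sim\frac{k-1}{12}\psi(N)$. The only difference is that the paper outsources this trace estimate to a cited lemma (itself proved via Eichler--Selberg, exactly as you sketch, with the key point being that the normalized error is $O_m(N^{1/2+\varepsilon})$ uniformly in $k$, so no separate $k$-aspect saving is actually needed beyond dividing by $s(N,k)$).
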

Our method is effective; we have explicit bounds determining exactly how fast $\Av_m(N,k)$ converges to $\sqrt{\frac{\sigma_1(m)}{m}}$.
As an example, in Section \ref{sec:computations}, we use these explicit bounds to 
classify precisely when $\Av_2(N,k) \le 1$. 

In Section \ref{sec:horizontal}, we consider the horizontal perspective; $N$ and $k$ are fixed and $m$ varies. From this perspective, it is more natural to think of the eigenvalues of $T'_m(N,k)$ as the normalized Fourier coefficients of newforms. Let $f$ be a newform of weight $k$ and level $N$. Let $a'_f(m) := a_f(m) / m^{(k-1)/2}$ denote the normalized Fourier coefficients of $f$, i.e. the eigenvalues of $T'_m(N,k)$ associated to $f$. Then to measure the average size of the $a'_f(m)$, we again use the quadratic mean,
\begin{align} \label{eqn:Avf-def}
    \Av_f(x) := \sqrt{ \frac{1}{x} \sum_{m \leq x} a'_f(m)^2 }. 
\end{align}
We note that since $f$ is a newform, the $a'_f(m)$ are real \cite[Remark 13.3.12]{cohen-stromberg}, and hence the above formula actually represents the average size.
In Theorem \ref{thm:Av-horizontal}, we determine the behavior of $\Av_f(x)$ as $x \to \infty$. 
\begin{theorem} \label{thm:Av-horizontal}
Let $f$ be a newform of level $N$ and weight $k$. Then
$$\Av_f(x) \longrightarrow \sqrt{\frac{12 \cdot (4\pi)^{k-1}}{(k-1)!}} \cdot \Vert f \Vert \quad \textnormal{as} \quad x \longrightarrow \infty.$$
Here, $\Vert f \Vert$ denotes the Petersson norm, $\Vert f \Vert = \sqrt{\langle f,f \rangle}$.
\end{theorem}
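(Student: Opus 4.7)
The approach is to reduce the asymptotic for $\Av_f(x)$ to an asymptotic for partial sums of $a_f(m)^2$ via Rankin--Selberg, and finish by Tauberian inversion.

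Observe that
\begin{align*}
\Av_f(x)^2 = \frac{1}{x}\sum_{m\leq x} \frac{a_f(m)^2}{m^{k-1}}.
\end{align*}
Consider the Dirichlet series $D(s) := \sum_{m\geq 1} a_f(m)^2 m^{-s}$, which converges absolutely for $\Real(s) > k$ by Deligne's bound. The classical Rankin--Selberg unfolding with the real-analytic Eisenstein series $E_\infty(z,s)$ at the cusp $\infty$ of $\Gamma_0(N)$ gives
\begin{align*}
\int_{\Gamma_0(N)\backslash\BH} |f(z)|^2\, y^k\, E_\infty(z,s)\,\frac{dx\,dy}{y^2} = \frac{\Gamma(s+k-1)}{(4\pi)^{s+k-1}}\, D(s+k-1).
\end{align*}
Since $E_\infty(z,s)$ has a simple pole at $s=1$ with constant residue $1/\mathrm{vol}(\Gamma_0(N)\backslash\BH)$, and $\mathrm{vol}(\Gamma_0(N)\backslash\BH) = \frac{\pi}{3}[\mathrm{SL}_2(\BZ):\Gamma_0(N)]$, equating residues at $s=1$ and using the standard normalization $\|f\|^2 = [\mathrm{SL}_2(\BZ):\Gamma_0(N)]^{-1}\int|f|^2 y^k\,\frac{dx\,dy}{y^2}$ makes the level-dependent factors cancel and yields
\begin{align*}
R := \Res_{s=k} D(s) = \frac{12\,(4\pi)^{k-1}}{(k-1)!}\,\|f\|^2,
\end{align*}
which is exactly the square of the stated limit.

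Next, apply the Wiener--Ikehara Tauberian theorem to the shifted series $g(s) := D(s+k-1) = \sum_{m\geq 1}\bigl(a_f(m)^2/m^{k-1}\bigr)\,m^{-s}$. Its coefficients are non-negative, it converges for $\Real(s)>1$, and by the preceding paragraph it has a unique simple pole at $s=1$ with residue $R$. The meromorphic continuation of $D(s)$ to $\Real(s)\geq k$ with no further poles on the critical line is supplied by the Rankin--Selberg / symmetric-square $L$-function theory, so the Tauberian theorem applies and produces
\begin{align*}
\sum_{m\leq x}\frac{a_f(m)^2}{m^{k-1}} \sim R\cdot x \qquad (x\to\infty).
\end{align*}
Dividing by $x$ and taking square roots gives $\Av_f(x) \to \sqrt{R}$, as desired. (Alternatively, one may invoke Rankin's sharper estimate $\sum_{m\leq x} a_f(m)^2 = \frac{R}{k}x^k + O(x^{k-2/5})$ and finish via Abel summation.)

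\textbf{Main obstacle.} The heart of the argument is the Rankin--Selberg residue computation: one must handle the Petersson-norm normalization with care in order to arrive at the precise, $N$-independent constant $\frac{12(4\pi)^{k-1}}{(k-1)!}$, and verify that the level factors $[\mathrm{SL}_2(\BZ):\Gamma_0(N)]$ and $\mathrm{vol}(\Gamma_0(N)\backslash\BH)$ cancel exactly. Once the residue is in hand, the Tauberian inversion and the passage to $\Av_f(x)$ are routine.
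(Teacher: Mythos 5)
Your proposal is correct and follows essentially the same route as the paper: reduce $\Av_f(x)^2$ to the residue at $s=1$ of the Dirichlet series $\sum_m a'_f(m)^2 m^{-s}$ and conclude via the Wiener--Ikehara theorem. The only difference is presentational --- you carry out the Rankin--Selberg unfolding and residue computation explicitly, whereas the paper simply cites the known value $\Res_{s=1} L_f(s) = \frac{12(4\pi)^{k-1}}{(k-1)!}\langle f,f\rangle$ together with the meromorphic continuation; your computation of that residue checks out.
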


The idea of this theorem is not new. Although perhaps not stated in these terms, others have considered equivalent questions (e.g. \cite{rankin}, \cite{selberg}, \cite{ivic-et-al}). However, we state this theorem here for completeness.

Finally, in Section \ref{sec:discussion}, we discuss our results and propose a lower bound on the size of the Fourier coefficients of newforms, which mimics the Atkin-Serre conjecture.

\section{The Vertical Perspective} \label{sec:vertical}
In this section, we take the vertical perspective. That is, for $m$ fixed, we study $\Av_m(N,k)$ as $N+k \to \infty$. Recall that here we are considering $N$ coprime to $m$ and $k \ge 2$ even. Note that $s(N, k) = 0$ for only finitely many pairs $(N,k)$ \cite[Table 2.6]{ross}, so $\Av_m(N,k)$ here is well-defined for $N + k$ sufficiently large.

Now, the case of $m=p$ prime follows from already known results. In \cite[Equation 27]{serre}, Serre showed that assuming $p \nmid N$, the asymptotic distribution of the eigenvalues of $T'_p(N,k)$ is given by $\displaystyle \mu_p = \frac{p+1}{\pi} \frac{(1-t^2/4)^{1/2} }{ (p^{1/2} + p^{-1/2})^2 - t^2 }  \,dt$.
This means that 
\begin{align}
    \lim_{N+k \to \infty} \Av_p(N,k)^2 
    &=   \lim_{N+k \to \infty} \frac{1}{s(N, k)} \displaystyle \sum_{i=1}^{s(N,k)} \lambda_i^2  \\
    &=   \int_{-2}^2 \frac{p+1}{\pi} \frac{(1-t^2/4)^{1/2} }{ (p^{1/2} + p^{-1/2})^2 - t^2 }  \cdot t^2 \,dt \\
    &=  \frac{p+1}{p}, 
\end{align}
and hence
\begin{equation}
    \Av_p(N,k) \longrightarrow \sqrt{\frac{p+1}{p}} \qquad \textnormal{as} \qquad N+k \longrightarrow \infty. \label{eqn:Av-asymp-prime}
\end{equation}

However, this approach will not work for general $m$ because Serre's methods do not produce a measure $\mu_m$ for composite $m$. Instead, we compute the asymptotics of $\Av_m(N,k)$ directly by writing $\Av_m(N,k)$ in terms of traces of certain Hecke operators. 

Let $\lambda_1, \ldots, \lambda_{s(N,k)}$ be the eigenvalues of $T'_m$. Then using the Hecke operator composition formula \cite[Theorem 10.2.9]{cohen-stromberg}, 
\begin{align}
    \sum_{i=1}^{s(N,k)} \lambda_{i}^2 
    &= \Tr \lrp{{T'_m}^2} = \Tr \sum_{d\mid m}  T'_{m^2/d^2} = \sum_{d\mid m}  \Tr T'_{m^2/d^2}.
\end{align}
Thus by \eqref{eqn:Av-m-def}, 
\begin{equation} \label{eqn:Av-m-trace-formula}
    \Av_m(N,k) = \sqrt{\frac{1}{s(N, k)}  \sum_{d\mid m} \Tr T'_{m^2/d^2}}.
\end{equation}

We now cite a lemma to estimate the traces of Hecke operators appearing in this formula. 
In the following, all big-$O$ notation is vertical, i.e. with respect to $N$ and $k$. 

\begin{lemma}[{\cite[Lemma 4.2]{ross-xue}, \cite[Proposition 4]{serre}}] \label{lem:Tr-asymptotics} 
Let $m$ be a fixed perfect square. Then for $N$ coprime to $m$ and $k \geq 2$ even,
\begin{align} \label{eqn:trace-estimate-perfect-square}
\Tr T'_m = \frac{k-1}{12} \psi(N) \frac{1}{\sqrt{m}} + O(N^{1/2+\varepsilon}) \qquad \text{for any } \varepsilon > 0.
\end{align}
\end{lemma}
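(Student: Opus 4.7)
The plan is to invoke the Eichler--Selberg trace formula for $T_m(N,k)$, isolate the identity contribution as the main term of \eqref{eqn:trace-estimate-perfect-square}, and bound the remaining terms.

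In schematic form (see \cite[Proposition 4]{serre}), for $\gcd(m,N)=1$ and $k\ge 2$ even one has
$$ \Tr T_m(N,k) \;=\; A_{\mathrm{id}} \;+\; A_{\mathrm{ell}} \;+\; A_{\mathrm{hyp}} \;+\; A_{\mathrm{res}}, $$
where $A_{\mathrm{id}} = \tfrac{k-1}{12}\psi(N)\,m^{k/2-1}$ when $m$ is a perfect square (and vanishes otherwise), $A_{\mathrm{ell}}$ is a finite sum over integers $t$ with $t^2 < 4m$, $A_{\mathrm{hyp}}$ is a sum over factorizations $m = dd'$ with $d\ne d'$, and $A_{\mathrm{res}}$ is a residual term. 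After dividing through by $m^{(k-1)/2}$, the identity piece becomes exactly $\tfrac{k-1}{12}\psi(N)/\sqrt{m}$, which is the main term of the lemma. The remaining task is to show that, under the same normalization, the other three contributions are absorbed into the error $O(N^{1/2+\varepsilon})$, uniformly in $k$.

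The decisive step is bounding the elliptic term. Each summand of $A_{\mathrm{ell}}$ decomposes as a product of (i) a Chebyshev-type weight $\frac{\rho^{k-1}-\bar\rho^{k-1}}{\rho-\bar\rho}$, where $\rho,\bar\rho$ are the complex roots of $X^2 - tX + m$; since $|\rho| = \sqrt{m}$ in this regime one has $\left|\frac{\rho^{k-1}-\bar\rho^{k-1}}{\rho-\bar\rho}\right| \le m^{(k-2)/2}/|\sin\theta|$, so after normalization by $m^{(k-1)/2}$ this factor is bounded by a constant independent of $k$; (ii) a Hurwitz class number $H(4m-t^2)$, bounded for fixed $m$; and (iii) a local factor at level $N$ counting $\Gamma_0(N)$-conjugacy classes of integer matrices with trace $t$ and determinant $m$. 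Standard estimates, combining $\sigma_0(N)=O(N^\varepsilon)$ with the Siegel-type bound $h(D)=O(|D|^{1/2+\varepsilon})$ on the class numbers entering the local counts at primes dividing $N$, control (iii) by $O(N^{1/2+\varepsilon})$. The hyperbolic and residual contributions involve only divisor-type sums in $N$ and $m$, and after normalization they are straightforward to bound by $O(N^\varepsilon)$.

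The main obstacle is precisely the level-$N$ local factor in (iii), which forces the error to be $N^{1/2+\varepsilon}$ rather than $N^\varepsilon$. The key structural observation that makes the whole scheme work is that the Chebyshev weight, of apparent size $m^{(k-2)/2}$, is exactly absorbed by the normalization $m^{(k-1)/2}$, so that all $k$-growth from the conjugacy classes disappears and only the $N$-dependence through divisor sums and class numbers at level $N$ survives.
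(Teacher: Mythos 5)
The paper itself does not prove this lemma; it is quoted from \cite[Lemma 4.2]{ross-xue} and \cite[Proposition 4]{serre}, and those proofs do proceed along the lines you propose: apply the Eichler--Selberg trace formula, read off the identity contribution $\frac{k-1}{12}\psi(N)m^{k/2-1}$ (present precisely because $m$ is a perfect square), and bound the remaining terms. Your handling of the main term and of the Chebyshev weight is correct: after dividing by $m^{(k-1)/2}$, the factor $\lvert\rho^{k-1}-\bar\rho^{k-1}\rvert/\lvert\rho-\bar\rho\rvert\le m^{(k-2)/2}/\lvert\sin\theta\rvert$ is bounded by a constant depending only on $m$ and $t$, so no $k$-growth survives the normalization.

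However, you have the source of the $N^{1/2+\varepsilon}$ exactly backwards, and that is where the real content of the error term lies. For fixed $m$, the elliptic term is only $O(N^{\varepsilon})$: the class numbers occurring there are $h_w\bigl(\tfrac{t^2-4m}{f^2}\bigr)$ for the finitely many $t$ with $t^2<4m$ and finitely many admissible $f$ --- these depend on $m$ alone, so no Siegel-type bound is needed and they carry no $N$-dependence --- while the level-$N$ local factor counts roots of $x^2-tx+m$ modulo divisors of $N$, which is $O(2^{\omega(N)})=O(N^{\varepsilon})$ since the discriminant $t^2-4m$ is a fixed nonzero integer. The term that genuinely costs $N^{1/2+\varepsilon}$ is the hyperbolic one, which you dismiss as $O(N^{\varepsilon})$: because $m$ is a perfect square, the factorization $d=d'=\sqrt{m}$ occurs, its weight $\min(d,d')^{k-1}=m^{(k-1)/2}$ does not decay after normalization, and its level factor is $\sum_{c\mid N}\phi(\gcd(c,N/c))$ with no coprimality restriction (since $d-d'=0$), which has exact order $N^{1/2}$ when $N$ is a perfect square. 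A sanity check is $m=1$: $\Tr T_1=\dim S_k(\Gamma_0(N))$, whose $O(N^{1/2+\varepsilon})$ deviation from $\frac{k-1}{12}\psi(N)$ comes from the cusp count, not from the elliptic-point counts, which are $O(N^{\varepsilon})$. As written, your bound on the hyperbolic term is false and your bound on the elliptic term rests on an irrelevant class-number estimate, so the step controlling the error term needs to be redone.
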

Here, $\psi(N)$ is the multiplicative function $\psi(N) = N \prod_{p\mid N} (1 + 1/p)$. Note that $\psi(N) \geq N$. We then use this lemma to determine the asymptotic behavior of $\Av_m(N,k)$.

{
    \renewcommand{\thetheorem}{\ref{thm:Av-vertical}}
    \begin{theorem}
    Let $m$ be fixed. Then for $N$ coprime to $m$ and $k \geq 2$ even,
$$\Av_m(N,k) \longrightarrow \sqrt{\frac{\sigma_1(m)}{m}} \quad \textnormal{as} \quad N+k \longrightarrow \infty.$$
    \end{theorem}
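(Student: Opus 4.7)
The plan is to use the trace formula identity \eqref{eqn:Av-m-trace-formula} already derived in the excerpt, together with Lemma \ref{lem:Tr-asymptotics} applied termwise, and a standard asymptotic for the dimension $s(N,k)$.

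First, observe that for every $d \mid m$ the integer $m^2/d^2$ is a perfect square, so Lemma \ref{lem:Tr-asymptotics} applies and gives
\[
\Tr T'_{m^2/d^2} = \frac{k-1}{12}\,\psi(N)\,\frac{d}{m} + O(N^{1/2+\varepsilon}),
\]
with the implied constant depending only on $m$ and $\varepsilon$. Summing over the (fixed, finite) set of divisors of $m$ and using $\sum_{d \mid m} d = \sigma_1(m)$, I would obtain
\[
\sum_{d \mid m} \Tr T'_{m^2/d^2} = \frac{k-1}{12}\,\psi(N)\,\frac{\sigma_1(m)}{m} + O(N^{1/2+\varepsilon}).
\]

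Next, I would invoke the standard dimension formula for $S_k(\Gamma_0(N))$ (see, e.g., Cohen--Stromberg), which yields
\[
s(N,k) = \frac{k-1}{12}\,\psi(N) + O\bigl(\sqrt{N}\bigr) + O\bigl(\sigma_0(N)\bigr),
\]
the two error terms coming respectively from the elliptic-point and cusp contributions and being independent of $k$. Since $\psi(N) \ge N$, both error terms are negligible against the main term whenever $N+k \to \infty$. Dividing the two displays gives
\[
\frac{1}{s(N,k)} \sum_{d\mid m} \Tr T'_{m^2/d^2} \;=\; \frac{\tfrac{k-1}{12}\psi(N)\tfrac{\sigma_1(m)}{m} + O(N^{1/2+\varepsilon})}{\tfrac{k-1}{12}\psi(N) + O(\sqrt{N}) + O(\sigma_0(N))}
\;\longrightarrow\; \frac{\sigma_1(m)}{m}
\]
as $N+k \to \infty$. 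Taking square roots and applying \eqref{eqn:Av-m-trace-formula} yields the theorem.

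The routine-looking step that actually requires the most care is verifying that the convergence holds uniformly in the two-parameter limit $N+k \to \infty$: one must check that the main term $\frac{k-1}{12}\psi(N)$ dominates the $k$-independent error $O(N^{1/2+\varepsilon})$ in the regime where $N$ stays bounded and $k \to \infty$, as well as in the regime where $N \to \infty$ with $k$ bounded. Both follow from the bound $\psi(N) \ge N$ and the fact that $k-1 \ge 1$, so the main term is $\ge \frac{1}{12} N$, which dominates $N^{1/2+\varepsilon}$ for $N$ large, while for $N$ bounded the main term grows linearly in $k$ and the error is bounded. No other step should present a serious obstacle, and the explicit bounds advertised after the theorem statement can be extracted by keeping track of the implied constants in Lemma \ref{lem:Tr-asymptotics} and the dimension formula.
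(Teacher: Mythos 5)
Your proposal is correct and follows essentially the same route as the paper: apply the identity \eqref{eqn:Av-m-trace-formula}, estimate each $\Tr T'_{m^2/d^2}$ via Lemma \ref{lem:Tr-asymptotics}, and compare against the asymptotic $s(N,k) = \frac{k-1}{12}\psi(N) + O(N^{1/2+\varepsilon})$. The only cosmetic difference is that you cite the standard dimension formula for $s(N,k)$ directly, whereas the paper obtains the same estimate by observing $s(N,k) = \Tr T'_1$ and reusing Lemma \ref{lem:Tr-asymptotics}; your closing check that the main term dominates in both regimes of $N+k\to\infty$ matches the paper's (implicit) reasoning.
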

    \addtocounter{theorem}{-1}
}
\begin{proof}

By \eqref{eqn:Av-m-trace-formula} and Lemma \ref{lem:Tr-asymptotics},
\begin{align}
    \Av_m(N,k)^2 &= \frac{1}{s(N, k)} \sum_{d\mid m} \Tr T'_{m^2/d^2}  \\ 
    &= \frac{1}{s(N, k)} \sum_{d\mid m}  \lrp{ \frac{k-1}{12} \psi(N)  \frac{d}{m} + O(N^{1/2+\varepsilon}) } \\
    &= \frac{\sigma_1(m)}{m} \cdot  \frac{\frac{k-1}{12} \psi(N)}{s(N, k)}  + \frac{O(N^{1/2+\varepsilon})}{s(N, k)}. \label{eqn:Av-m-asymp-temp}
\end{align}

Note from Lemma \ref{lem:Tr-asymptotics} that $s(N, k) = \Tr T'_1 = \frac{k-1}{12} \psi(N) + O(N^{1/2+\varepsilon})$.  
Thus as $N+k \longrightarrow \infty$, 
\begin{align}
    \frac{\frac{k-1}{12} \psi(N)}{s(N, k)}  \longrightarrow 1 \qquad\qquad \text{and} \qquad\qquad \frac{O(N^{1/2+\varepsilon})}{s(N, k)} \longrightarrow 0. 
\end{align}

So by \eqref{eqn:Av-m-asymp-temp}, $\Av_m(N,k)^2 \longrightarrow \frac{\sigma_1(m)}{m}$, proving the desired result. 
\end{proof}

\section{A classification of when \texorpdfstring{$\Av_2(N,k) \le 1$}{Av\_2(N,k)} }\label{sec:computations}

Our method is effective; we have explicit bounds for all of the error terms in the proof of Theorem \ref{thm:Av-vertical}, which allows us to determine exactly how fast $\Av_m(N,k)$ converges to $\sqrt{\frac{\sigma_1(m)}{m}}$. 
The fact that $\Av_m(N,k) \longrightarrow \sqrt{\frac{\sigma_1(m)}{m}}$ means that for any $\alpha < \sqrt{\frac{\sigma_1(m)}{m}}$, we will have $\Av_m(N,k) \leq \alpha$ for only finitely many pairs $(N,k)$. Recall that here, we are considering $N$ coprime to $m$ and $k\ge 2$ even. We take $m=2,\, \alpha=1$ in particular, and compute the complete list of pairs $(N,k)$ for which $\Av_2(N,k) \leq 1$.

\begin{theorem} \label{thm:Av-2-computation}
Consider $N\geq1$ odd and $k\geq2$ even such that $\dim S_k(\Gamma_0(N)) > 0$. Then $\Av_2(N,k) \leq 1$ only for the pairs $(N,k)$ given in Table \ref{table:Av-le-1}.
\end{theorem}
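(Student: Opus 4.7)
The plan hinges on a clean simplification of $\Av_2(N,k)^2$. Applying \eqref{eqn:Av-m-trace-formula} with $m=2$ (whose only divisors are $1$ and $2$) and using $T'_1 = \Id$, so $\Tr T'_1 = s(N,k)$, one obtains
\[
\Av_2(N,k)^2 \;=\; \frac{\Tr T'_1 + \Tr T'_4}{s(N,k)} \;=\; 1 \;+\; \frac{\Tr T'_4}{s(N,k)}.
\]
Hence $\Av_2(N,k) \le 1$ if and only if $\Tr T'_4(N,k) \le 0$, and the entire theorem reduces to classifying the pairs $(N,k)$ (with $N$ odd, $k \ge 2$ even, $s(N,k) > 0$) for which this single Hecke trace is non-positive. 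This is the key conceptual step.

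Next, I would invoke the effective version of Lemma \ref{lem:Tr-asymptotics}. Applied to $m = 4$ it gives
\[
\Tr T'_4 \;=\; \frac{k-1}{24}\psi(N) \;+\; E(N,k),
\]
where, as the paper claims right after Theorem \ref{thm:Av-vertical}, $|E(N,k)| \le c_\varepsilon\, N^{1/2+\varepsilon}$ for an explicit computable constant $c_\varepsilon$. Since $\psi(N) \ge N$, the inequality $\tfrac{k-1}{24} N > c_\varepsilon N^{1/2+\varepsilon}$ produces explicit thresholds $N_0$ and $k_0$ beyond which $\Tr T'_4 > 0$ is forced. Any pair $(N,k)$ with $\Av_2(N,k) \le 1$ must therefore lie in the finite region
\[
\mathcal R \;=\; \{(N,k) : N \text{ odd},\ k \text{ even},\ s(N,k)>0,\ N < N_0,\ k < k_0\}.
\]

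The remaining step is a finite verification: for each $(N,k) \in \mathcal R$, compute $\Tr T'_4$ exactly, either by direct evaluation of the Eichler-Selberg trace formula for $T_4$ on $S_k(\Gamma_0(N))$ (where the elliptic and hyperbolic contributions come from a short list of quadratic forms of discriminant dividing $16$), or by forming the matrix of $T_4$ in an explicit basis (modular symbols or the Victor Miller basis) and taking its trace. Collect the pairs with $\Tr T'_4 \le 0$; these should be exactly the pairs listed in Table \ref{table:Av-le-1}.

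The main obstacle is quantitative rather than conceptual: making the error bound $c_\varepsilon$ from Lemma \ref{lem:Tr-asymptotics} sharp enough that $N_0$ and $k_0$ are of practically manageable size, and handling the ``borderline'' pairs near $\partial \mathcal R$ where the main term and the error term are comparable, so that no exceptional case is accidentally omitted from the table.
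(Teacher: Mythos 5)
Your proposal is correct and follows essentially the same route as the paper: reduce $\Av_2(N,k)\le 1$ to $\Tr T'_4(N,k)\le 0$ via \eqref{eqn:Av-m-trace-formula}, rule out all but finitely many pairs using an explicit version of the trace estimate (the paper quotes the explicit lower bound from the proof of \cite[Lemma 6.1]{ross-xue} together with bounds on $2^{\omega(N)}/\psi(N)$ and $2^{\omega(N)}\sqrt{N}/\psi(N)$ to handle $N\ge 150{,}000$, then large $k$ for each remaining $N$), and finish by a finite computer check. The quantitative obstacle you flag is exactly what the cited explicit bounds resolve.
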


\begin{proof}
From \eqref{eqn:Av-m-trace-formula},
\begin{align}
    \Av_2(N,k) &= \sqrt{1 + \frac{\Tr T_{4}'}{ s(N,k)}}.
\end{align}
So $\Av_2(N,k) \le 1$ precisely when $\Tr T_4' \le 0$. By Lemma \ref{lem:Tr-asymptotics} we have that $\Tr T'_4 = \frac{k-1}{24} \psi(N) + O(N^{1/2+\varepsilon})$ for any $\varepsilon>0$. And in fact, the proof of \cite[Lemma 6.1]{ross-xue} gave the explicit lower bound 
\begin{align} \label{eqn:T4-lower-bound-temp}
    \Tr T_4'(N,k) &\geq  \psi(N) \lrb{\frac{k-1}{24} - \lrp{14 \cdot \frac{2^{\omega(N)}}{\psi(N)} + \frac12 \frac{2^{\omega(N)} \sqrt{N}}{\psi(N)}} }.
\end{align}
Then for $N \geq 150,\!000$, \cite[Lemma 3.1]{ross-xue} gives the explicit bounds 
\begin{align} \label{eqn:theta-i-explicit-bounds}
    \frac{2^{\omega(N)}}{\psi(N)} \leq 0.000147 \qquad \text{and} \qquad \frac{2^{\omega(N)} \sqrt{N}}{\psi(N)} \leq 0.0607, 
\end{align}
which means that
\begin{align}
    \lrp{14 \cdot \frac{2^{\omega(N)}}{\psi(N)} + \frac12 \frac{2^{\omega(N)} \sqrt{N}}{\psi(N)}} \leq 14 \cdot 0.000147 + \frac12 \cdot 0.0607 < \frac{1}{24} \leq \frac{k-1}{24}.
\end{align}
This means that by \eqref{eqn:T4-lower-bound-temp}, $\Tr T'_4(N,k) > 0$ for all $N \geq 150,\!000$. Then for each of the finitely many remaining values of $N$, we will also have $\frac{k-1}{24} > \lrp{14 \frac{2^{\omega(N)}}{\psi(N)} + \frac12 \frac{2^{\omega(N)} \sqrt{N}}{\psi(N)}}$, and hence $\Tr T'_4(N,k) > 0$, for sufficiently large $k$.
Finally, we check the finitely many remaining pairs $(N,k)$ by computer, which yields the complete list given in Table \ref{table:Av-le-1}. See \cite{ross-code} for the code.
\end{proof}

\begin{mytable} \label{table:Av-le-1}
\begin{equation}
\setlength{\arraycolsep}{1.7mm}
\begin{array}{|c|c||c|c||c|c||c|c|}
\hline
\multicolumn{8}{|c|}{\makecell{
\text{The complete list of pairs $(N,k)$ for which $\Av_2(N,k) \leq 1$.  } 
}} \\
\hline
 (N,k) & \Av_2  &  (N,k) & \Av_2  &  (N,k) & \Av_2  &  (N,k) & \Av_2  \\
\hline
\hline
(1, 12) & \frac{3}{8}\sqrt{2} & (5, 6) & \frac14\sqrt{2} & (21, 2) & \frac{1}{2}\sqrt{2} & (53, 2) & 1 \\
\hline
(1, 20) & \frac{57}{128} \sqrt{2} & (7, 4) & \frac14\sqrt{2} & (23, 2) & \frac12\sqrt{3} & (57, 2) & \frac{3}{10}\sqrt{10} \\
\hline
(1, 22) & \frac{9}{64}\sqrt{2}  & (9, 4) & 0 & (27, 2) & 0 & (61, 2) & 1 \\
\hline
(1, 26) & \frac{3}{512}\sqrt{2} & (11, 4) & \frac{1}{2}\sqrt{2} & (31, 2) & \frac12\sqrt{3} & (63, 2) & \frac{3}{10}\sqrt{10} \\
\hline
(1, 30) & \frac{3}{512} \sqrt{26687} & (15, 2) & \frac{1}{2}\sqrt{2} & (37, 2) & 1 & (81, 2) & \frac{1}{2}\sqrt{3} \\
\hline
(1, 46) & \frac{3}{2^{16}}\sqrt{468559893} & (17, 2) & \frac{1}{2}\sqrt{2} & (45, 2) & \frac{1}{2}\sqrt{2}  & \multicolumn{2}{c}{~} \\     
\cline{1-6}
(3, 8) & \frac{3}{8}\sqrt{2} & (19, 2) & 0 & (49, 2) & \frac{1}{2}\sqrt{2}  & \multicolumn{2}{c}{~} \\
\cline{1-6}
\end{array}
\end{equation}
\end{mytable}

\section{The Horizontal Perspective}\label{sec:horizontal} 

In this section, we take the horizontal perspective. That is, for $f$ fixed, we study $\Av_f(x)$ as $x \to \infty$, as defined in \eqref{eqn:Avf-def}. Recall that $f$ here is a newform of weight $k$ and level $N$, and that $a'_f(m)$ denotes the $m$-th normalized Fourier coefficient of $f$.

We note that, similarly to the vertical perspective, the average size of the \textit{prime-indexed} normalized Fourier coefficients follows from already known results.
When $f$ is CM, the distribution of the $a_f'(p)$ tends to the measure $\mu_{CM} = \lrp{\frac{1}{2}\delta_0(t) + \frac{1}{4\pi}\frac{1}{\sqrt{1 - t^2/4}}} dt$, where $\delta_0$ is the Dirac delta function \cite{arias-de-reyna-et-al}, \cite[Theorem 15.4]{harmonic-maass-forms}. Consequently,
\begin{align} \label{eqn:avg-afp2-cm}
    \lim_{x \to \infty} \frac{1}{\#\{ \text{primes } p \leq x\} } \sum_{  p \leq x }  a'_f(p)^2  
    &= \int_{-2}^2 \lrp{\frac{1}{2}\delta_0(t) + \frac{1}{4\pi}\frac{1}{\sqrt{1 - t^2/4}} } \cdot t^2 \,dt = 1. 
\end{align}
When $f$ is non-CM, Barnet-Lamb, Geraghty, Harris, and Taylor showed that the distribution of the $a'_f(p)$ tends to the Sato-Tate measure $\displaystyle \mu_{ST} = \frac{1}{\pi} \sqrt{1-t^2/4}  \,dt$ as $p \to \infty$ \cite{BGHT}, \cite{arias-de-reyna-et-al}. Consequently,   
\begin{align} \label{eqn:avg-afp2}
    \lim_{x \to \infty} \frac{1}{\#\{ \text{primes } p \leq x\} } \sum_{  p \leq x }  a'_f(p)^2  
    &= \int_{-2}^2 \frac{1}{\pi} \sqrt{1-t^2/4} \cdot t^2 \,dt = 1.
\end{align}
In this sense, the average size of the $a'_f(p)$ is $\sqrt{1} = 1$, regardless of whether $f$ is CM or not. Interestingly, if we consider the general $r$-th mean (as opposed to just using the quadratic mean at $r=2$), then
$$\lim_{x \to \infty} \lrp{ \frac{1}{\#\{ \text{primes } p \leq x\} } \sum_{  p \leq x }  \lrabs{a'_f(p)}^r}^{1/r}$$
agrees for CM and non-CM newforms only at $r = 2$. One can verify this fact by computing the two integrals directly. 

The average size of \textit{all} the normalized Fourier coefficients of $f$ also follows from already known results. Recall that we are interested in determining
\begin{align} 
    \lim_{x\to\infty} \Av_f(x) =  \sqrt{ \lim_{x \to \infty} \frac{1}{x}  \sum_{  m \leq x }  a'_f(m)^2}  .
\end{align}

This problem reduces to determining the asymptotic behavior of \\ ${\sum_{  m \leq x }  a'_f(m)^2}$. By the Wiener-Ikehara theorem \cite[Theorem 1.2]{murty-et-al}, this in turn reduces to determining the residue of the Dirichlet series $L_f(s) := \sum_{m = 1}^\infty a'_f(m)^2 m^{-s}$ at $s=1$. This Dirichlet series is closely related to the symmetric square $L$-function, and has already been studied extensively; in particular, its residues are known.
From \cite[Corollary 11.12.3]{cohen-stromberg}, we have that $L_f(s)$ can be extended meromorphically to the entire complex plane, and that it has a simple pole at $s=1$ of residue $\Res_{s=1} L_f(s) = \frac{12 \cdot (4\pi)^{k-1}}{(k-1)!} \langle f,f \rangle$. This fact was originally proven by Rankin \cite[Theorem 3]{rankin} in order to show that 
\begin{align} \label{eqn:avg-tau-unnorm}
\sum_{m\leq x} \tau(m)^2 \ \sim \ \lrp{\frac{1}{12}  \Res_{s=1} L_\Delta(s)} \cdot x^{12} = \frac{(4\pi)^{11}}{11!} \langle \Delta,\Delta \rangle \cdot x^{12}
\end{align}
In our case, we are interested in something slightly different, the \textit{normalized} Fourier coefficients. By the Wiener-Ikehara Theorem, we have $\sum_{m\leq x} a'_f(m)^2 \ \sim \ \lrp{\Res_{s=1} \, L_f(s)} \cdot x$. This means that \begin{align}
    \lim_{x\to\infty} \Av_f(x)^2 = \lim_{x\to\infty} \frac{1}{x} \sum_{m\leq x} a'_f(m)^2 = \Res_{s=1} \, L_f(s) = \frac{12 \cdot (4\pi)^{k-1}}{(k-1)! } \langle f,f \rangle,
\end{align}  
verifying Theorem \ref{thm:Av-horizontal}.

As an example, we use this theorem to give the average size of the normalized Ramanujan $\tau$ function. Note the distinction between the average size and the constant given in \eqref{eqn:avg-tau-unnorm} by Rankin, which is due to our normalization of the Fourier coefficients.

\begin{corollary} The average size of the normalized Ramanujan $\tau$ function is given by   
\begin{align}
    \lim_{x\to\infty} \Av_\Delta(x) = \sqrt{\frac{12 \cdot (4\pi)^{11}}{(11)!} } \Vert \Delta \Vert = 0.619745...
\end{align}
\end{corollary}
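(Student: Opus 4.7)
The plan is to apply Theorem \ref{thm:Av-horizontal} directly to $f = \Delta$, the Ramanujan discriminant function normalized so that $a_\Delta(m) = \tau(m)$, and then evaluate the resulting expression numerically. Since $\Delta = q \prod_{n \geq 1}(1-q^n)^{24}$ is the unique normalized newform in $S_{12}(\Gamma_0(1))$, Theorem \ref{thm:Av-horizontal} applies with weight $k = 12$ and level $N = 1$, and substitution immediately yields
$$\lim_{x \to \infty} \Av_\Delta(x) = \sqrt{\frac{12 \cdot (4\pi)^{11}}{11!}} \, \Vert \Delta \Vert,$$
which is the first equality of the corollary.

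It then remains only to verify the decimal $0.619745\ldots$. For this I would supply a numerical value for the Petersson norm $\Vert \Delta \Vert$, obtained either by direct numerical integration of $|\Delta(z)|^2 y^{12} \, dx\,dy/y^2$ over a standard fundamental domain for $\mathrm{SL}_2(\mathbb{Z}) \backslash \mathbb{H}$, or by quoting the well-known tabulated value of $\Vert \Delta \Vert^2$ (e.g.\ from LMFDB, Sage, or Magma). Substituting this value into the displayed formula produces the stated decimal.

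The only real pitfall — more a bookkeeping matter than an obstacle — is keeping normalizations consistent throughout. One must use the same convention for the Petersson inner product as in Theorem \ref{thm:Av-horizontal} (i.e.\ the one from \cite{cohen-stromberg}), together with the normalization of $\Delta$ under which $a_\Delta(1) = 1$. Because $N = 1$, no index factor between $\mathrm{SL}_2(\mathbb{Z})$ and $\Gamma_0(N)$ intrudes, and once these conventions are pinned down the whole verification reduces to a single numerical substitution.
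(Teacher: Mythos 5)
Your proposal is correct and matches the paper's approach: the corollary is just Theorem \ref{thm:Av-horizontal} specialized to $f = \Delta$ with $k=12$, $N=1$, followed by substituting a numerical value for the Petersson norm (the paper quotes $\Vert \Delta \Vert = 0.001017527\ldots$ from \cite[p.~286]{cohen-stromberg}). Your attention to keeping the Petersson normalization consistent with the cited source is exactly the right caution.
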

\noindent For this numerical value, we used $\Vert \Delta \Vert = 0.001017527... $ from \cite[p. 286]{cohen-stromberg}.

\section{Discussion}\label{sec:discussion}

For any newform $f$, recall that the optimal
upper bound on the size of $a'_f(m)$ (for composite $m$) is a natural generalization of the optimal upper bound on the size of $a'_f(p)$ (for prime $p$). We have Deligne's bound $\lrabs{a'_f(p)} \leq 2$, and the natural generalization $\lrabs{a'_f(m)} \leq \sigma_0(m)$. 
Additionally, we have from \eqref{eqn:avg-afp2-cm}, \eqref{eqn:avg-afp2}, and Theorem \ref{thm:Av-horizontal} that the average size of the $a'_f(m)$ is a (nonzero) constant factor of the average size of the $a'_f(p)$.

For these reasons, we also conjecture that the Fourier coefficients of $f$ should have a lower bound that resembles a lower bound on the prime-indexed Fourier coefficients. In particular, we propose Conjecture \ref{conj:atkin-serre-gen}, which mimics the Atkin-Serre conjecture \cite[p. 2]{rouse2007atkin} on prime-indexed Fourier coefficients. 
\begin{conjecture} \label{conj:atkin-serre-gen}
    Let $f$ be a 
    newform of weight $k \geq 4$ and level $N$. 
    Then for all $\varepsilon > 0$, there exists a constant $c_\varepsilon > 0$ such that for all $m\ge1$, if
    $a_f(m) \neq 0$, then $\lrabs{a_f(m)} \geq c_\varepsilon m^{(k-3)/2 - \varepsilon}$.
\end{conjecture}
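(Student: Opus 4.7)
The plan is to reduce Conjecture \ref{conj:atkin-serre-gen} to the original Atkin--Serre conjecture on prime-indexed coefficients, together with a delicate Diophantine estimate on the Satake angles of $f$. Since $a_f$ is multiplicative on coprime arguments, $a_f(m) = \prod_{p^n \| m} a_f(p^n)$, so it suffices to establish the slightly stronger prime-power bound $\lrabs{a_f(p^n)} \geq c \,(p^n)^{(k-3)/2 - \varepsilon/2}$ whenever $a_f(p^n) \neq 0$. The accumulated factor $c^{\omega(m)}$ from the multiplicative product can then be absorbed into the extra $m^{-\varepsilon/2}$ slack, using $\omega(m) = O(\log m / \log\log m) = o(\log m)$, recovering the exponent $(k-3)/2 - \varepsilon$ with a new constant $c_\varepsilon$.

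For a fixed prime $p$, Deligne's bound allows one to parameterize the Satake roots as $\alpha_p, \beta_p = p^{(k-1)/2} e^{\pm i \theta_p}$ with $\theta_p \in [0,\pi]$, yielding $a_f(p^n) = p^{n(k-1)/2} \, U_n(\cos \theta_p)$, where $U_n$ is the Chebyshev polynomial of the second kind, $U_n(\cos \theta) = \sin((n+1)\theta) / \sin \theta$. After cancelling $p^{n(k-1)/2}$, the desired prime-power bound rewrites as the requirement
\[ \lrabs{\sin((n+1)\theta_p)} \geq c \,\lrabs{\sin \theta_p} \,(p^n)^{-1-\varepsilon/2} \]
whenever the left side is nonzero. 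This target is rather generous, since heuristically $\lrabs{U_n}$ is of order $n$ at most points in $(-1,1)$; the difficulty concentrates entirely near its zeros.

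The case $n = 1$ is exactly the Atkin--Serre conjecture itself. For $n \geq 2$, one would split according to whether $\theta_p / \pi$ is close to a rational with denominator at most $n+1$: if not, the Chebyshev bound is automatic, while if so, one needs a quantitative irrationality measure for $\theta_p/\pi$ at that rational. The natural input here is a transcendence-type statement about the Satake angles of the Galois representation attached to $f$. A realistic intermediate target would be a \emph{conditional} version of Conjecture \ref{conj:atkin-serre-gen}, assuming Atkin--Serre together with effective Diophantine control on $\theta_p/\pi$ at rationals of small denominator.

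The main obstacle is unambiguous. The original Atkin--Serre conjecture is open in essentially all non-CM cases, and no approach of the shape above can succeed without it. Even granting Atkin--Serre, producing uniform Diophantine bounds on the Satake angles as $p$ varies appears to require transcendence tools beyond current reach. For this reason, honest progress on the unconditional conjecture will most likely first appear in conditional form, with the above reduction serving to isolate these two independent sources of difficulty.
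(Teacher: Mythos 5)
The statement you are addressing is labelled a \emph{conjecture} in the paper, and the paper offers no proof of it --- only numerical evidence and an analogy with the Atkin--Serre conjecture. Your proposal, to its credit, does not pretend otherwise: it is a reduction, not a proof, and you say so. The reduction itself is sound as far as it goes. Multiplicativity of $a_f$ correctly reduces the problem to a uniform prime-power bound, and your absorption of the factor $c^{\omega(m)}$ into the $m^{-\varepsilon/2}$ slack via $\omega(m) = O(\log m/\log\log m)$ is a standard and correct maneuver. The Chebyshev identity $a_f(p^n) = p^{n(k-1)/2}\,U_n(\cos\theta_p)$ and the resulting target inequality $\lrabs{\sin((n+1)\theta_p)} \geq c\,\lrabs{\sin\theta_p}\,(p^n)^{-1-\varepsilon/2}$ are also correctly derived, and you rightly identify that the $n=1$ case is Atkin--Serre itself while $n \geq 2$ demands Diophantine control of the Satake angles that is far beyond current technology.

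The genuine gap, then, is simply that nothing is proved: both inputs to your reduction are open, so the proposal cannot stand as a proof of the conjecture. Two smaller points are also worth flagging. First, your Satake parameterization $\alpha_p,\beta_p = p^{(k-1)/2}e^{\pm i\theta_p}$ applies only to $p \nmid N$; for ramified primes $a_f(p^n) = a_f(p)^n$ with $\lrabs{a_f(p)} \in \{0,\, p^{(k-2)/2},\, p^{(k-1)/2}\}$, so the required bound holds trivially there --- an easy case, but one your argument silently omits. Second, the paper explicitly notes that restricting to $a_f(m) \neq 0$ is meant to let the conjecture cover CM forms as well, whereas classical Atkin--Serre is stated for non-CM forms; your reduction invokes Atkin--Serre without addressing how the CM case (where half the $a_f(p)$ vanish and the nonvanishing ones require a separate, Baker-type analysis) would be handled. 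As a discussion of why the conjecture is plausible and where the difficulty lies, your text is a reasonable companion to the paper's Section 5; as a proof, it is not one, and it should not be presented as such.
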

This conjecture aligns with our numerical computations \cite{ross-code}.
We note that contrary to the classical Atkin-Serre conjecture, we must account for the fact that it is possible to have $a_f(m) = 0$ for infinitely many $m$. By multiplicativity, even if $a_f(p) = 0$ for only one prime $p$, then $a_f(m) = 0$ for infinitely many $m$. Restricting to $m$ such that $a_f(m) \neq 0$ also makes it possible to remove the ``non-CM" condition from classical Atkin-Serre.

\section*{Acknowledgements}
This research was supported by NSA MSP grant H98230-24-1-0033. We are grateful for the referee's careful reading of this paper and valuable advice.

\bibliographystyle{plain}
\bibliography{bibliography.bib}

\end{document}